\documentclass[12pt,reqno]{amsart}
\usepackage[a4paper, total={6in, 8in}]{geometry}
\usepackage{amsthm}
\usepackage{amsmath}
\usepackage{etoolbox}
\usepackage{amsfonts}
\usepackage[shortlabels]{enumitem}
\usepackage{amssymb}
\usepackage[all,cmtip]{xy}
\newtheorem{thm}{Theorem}[section]

\newtheorem{cor}[thm]{Corollary}

\newtheorem{remark}[thm]{Remark}

\newtheorem{ex}[thm]{Example}

\newcommand{\norm}[1]{\left\|#1\right\|}

\newcommand{\bb}[1]{\mathbb{#1}}
\newcommand{\cl}[1]{\mathcal{#1}}

\newcounter{egcounter}
\setcounter{egcounter}{0}

\begin{document}
\title[GKZ theorem for polynomials]{Multiplicativity of linear functionals on function spaces on an open unit disc}
\author{Jaikishan}
\address{Department Of Mathematics\\
      SNIoE (deemed to be university)\\
   School of Natural Sciences\\
  Gautam Budh Nagar - 203207\\
         Uttar Pradesh, India}
\email{jk301@snu.edu.in}

\author{Sneh Lata}
\address{Department Of Mathematics\\
       SNIoE (deemed to be universty)\\
        School of Natural Sciences,\\
         Gautam Budh Nagar - 203207\\
         Uttar Pradesh, India}
\email{sneh.lata@snu.edu.in}

\author{ Dinesh Singh}
\address{Centre For Lateral Innovation, Creativity and Knowledge\\
      SGT University\\
   Gurugram 122505\\
        Haryana, India}
\email{dineshsingh1@gmail.com}

\subjclass[2010]{Primary 47B38; Secondary 30H10, 46H05, 47B37}

\keywords{Gleason-Kahane-$\dot{\text{Z}}$elazko theorem, multiplicative linear functionals, Hardy spaces, weighted Hardy spaces, strictly cyclic weighted shift.}

\begin{abstract}
This paper presents a fairly general version of the well-known Gleason-Kahane-$\dot{\text{Z}}$elazko (GKZ) theorem in the spirit of a GKZ type theorem obtained recently by Mashreghi and Ransford for Hardy spaces. In effect, we characterize a class of linear functionals as point evaluations on the vector space of all complex polynomials $\cl P$. We do not make any topological assumptions on $\cl P$. We then apply this characterization to present a version of the GKZ theorem for a vast class of topological spaces of complex-valued functions including the Hardy, Bergman, Dirichlet, and many more well-known function spaces. We obtain this result under the assumption of continuity of the linear functional, which we show, with the help of an example, to be a necessary condition for the desired conclusion. Lastly, we use the GKZ theorem for polynomials to obtain a version of the GKZ theorem for strictly cyclic weighted Hardy spaces.   
\end{abstract}

 \maketitle  

\section{Introduction} The very elegant and well-known Gleason-Kahane-$\dot{\text{Z}}$elazko (GKZ) theorem \cite{Gle, KZ,Zel} 
characterizes multiplicative linear functionals on Banach algebras as those linear functionals $F$ such that $F(e) = 1$ and 
$F(a)\ne 0$ for all invertible elements $a,$ where $e$ is the identity. (Note: The GKZ theorem makes no continuity assumptions on 
$F$). This theorem–for very evident reasons–has attracted a great deal of attention, particularly in recent times, as witnessed by the appearance of \cite{CHMR, MR, MR1, MRR, Sam}, and especially the various references in the important and useful survey article \cite{MR}.

Our interest in the GKZ Theorem stems from the work of Mashreghi and Ransford \cite{MR1} who produced a GKZ type theorem for modules over Banach algebras. They then used it to give the following version of the GKZ theorem for Hardy spaces.

\begin{thm}\label{MRH} \cite[Theorem 2.1]{MR1} Let $0<p\le \infty$ and let $F:H^p(\bb D) \to \bb C$ be a linear functional such that $F(1)=1$ and $F(g)\ne 0$ for every outer function $g$ in $H^p(\bb D).$ Then there exists a scalar 
$w\in \bb D$ such that $F(f)=f(w)$ for all $f$ in $H^p(\bb D.)$
\end{thm}

This is a true analogue of the GKZ theorem on Hardy spaces since any function $g$ in a Hardy space for which there exists an $h$ in the space such that $gh$ = 1 has to be outer and a point evaluation on a Hardy space is multiplicative whenever the multiplication makes sense. In the same paper \cite{MR1} the authors also proved the following GKZ type theorem for a wide variety of Banach spaces.

\begin{thm}\label{MRG} \cite[Theorem 3.1]{MR1} Let 
$\cl X$ be a Banach space consisting of analytic functions on the open unit disc $\bb D$ satisfying the following properties:
\begin{enumerate}[(i)]
\item for each $w\in \bb D,$ the evaluation map at $w$ is continuous on $\cl X;$
\item $\cl X$ contains all polynomials and polynomials are dense in $\cl X;$
\item multiplication with the function $z$ is well-defined on $\cl X.$ 
\end{enumerate}
Furthermore, let $\cl Y$ be a subset of $\cl X$ with the following properties:
\begin{enumerate}[(i)]
\item if $g\in \cl X$ with $0<\inf_{z\in \bb D}|g(z)|\le \sup_{z\in \bb D}|g(z)|<\infty,$ then $g\in \cl Y;$
\item $\cl Y$ contains all linear polynomials $z-\alpha$ with $|\alpha|=1.$
\end{enumerate}
If $F$ is a continuous linear functional on $\cl X$ such that $F(1)=1$ and $F(g)\ne 0$ for each $g\in \cl Y,$ then there exists a scalar $w\in \bb D$ such that $F(f)=f(w)$ for all $f\in \cl X.$
\end{thm}

Both these results motivated us to investigate the possibility of obtaining a version of GKZ theorem for function spaces by knowing the behavior of the linear functional on a smaller and simpler class of functions in them. 

In this paper, we prove a far reaching and very general version of the GKZ theorem, as in Theorem A stated below, that characterizes all multiplicative linear functionals amongst all linear functionals on the algebraic vector space $\cl P$ of all complex polynomials. In effect, Theorem A characterizes multiplicative linear functionals amongst all linear functionals 
$F$ on $\cl P$ as those that satisfy $F(1) = 1$ and $F((z-\alpha)^n) \ne 0$ for all $n = 0, 1,2,3,\dots$ and for all $\alpha$ outside some fixed open disk with centre $0$. We establish that such functionals have to necessarily be point evaluations on $\cl P$. Notice that polynomials of the form $(z-\alpha)^n$ for $n\ge 1$ and scalars $\alpha$ with $|\alpha|\ge 1$ are in fact outer functions in the Hardy space $H^p(\bb D).$ See Theorem A below for a precise version.  

\vspace{.2 cm}

\noindent {\bf Theorem A.} {\em Let $\cl P$ denote the vector space of all complex polynomials and let $F: \cl P\to \bb{C}$ be a linear functional. Suppose $F(1)=1$ and there exists a positive real number $r$ such that 
$F((z-\alpha)^n)\ne 0$ for all $n\ge 1$ and for all complex numbers $\alpha $ with $|\alpha|\ge r.$ Then 
there exists a complex number $w$ with $|w|<r$ such that $F(p)=p(w)$ for all polynomials $p\in \cl P.$}

\vspace{.2 cm}

We want to point out the following notable facts about our theorem:
\begin{enumerate}[(i)]
\item Our Theorem A tells us that the multiplicative linear functionals on $\cl P$ are characterized by being non-zero on a very special and limited class of powers of linear polynomials that mimic invertibility in some sense.
\item Our proof is very elementary and relies on simple algebraic properties of polynomials and is quite like the comparatively recent and elementary proof of the original GKZ theorem by Roitman and Sternfeld as given in \cite{RS}.
\end{enumerate}

As an application, we obtain the following GKZ theorem for topological spaces. Although different, it is in the same spirit 
as Theorem \ref{MRG} (\cite[Theorem 3.1]{MR1}). Let $\cl F(r,\bb C)$ denote the set of all complex-valued functions on the open disc with center at $0$ and radius $r.$ Clearly it is a vector space over $\bb C$ with respect to pointwise addition and scalar multiplication. 

\vspace{.2 cm}

\noindent{\bf Theorem B.} {\em Let $\cl X$ be a vector subspace of $\mathcal{F}(r,\mathbb{C})$. Suppose $\cl X$ also has a topology (not necessarily compatible with the linear structure on $\cl X$) such that: 
\begin{enumerate}[(i)]
\item for each $w$ with $|w|<r$, the point evaluation map $f \mapsto f(w)$ is continuous on $\cl X$;
\item $\cl X$ contains all the polynomials and they are dense in $\cl X.$   
\end{enumerate}
Let $F: \cl X \to \mathbb{C}$ be a continuous linear functional such that $F(1)=1$ and $F((z-\alpha)^n)\ne 0$ whenever $n\ge 1$ and $\alpha$ is a scalar with $|\alpha|\ge r$. Then there exists a scalar $w$ with $|w|<r$ such that 
\begin{equation*}
F(f)=f(w) 
\end{equation*}
for every $f\in \cl X.$}

\vspace{.2 cm} 

Note that all the function spaces mentioned in \cite[Page 1018] {MR1} satisfy the hypotheses of Theorem B. Moreover, since we do not require the space $\cl X$ to be a Banach space, we can further include a few more well-known spaces for which the theorem holds such as the Hardy spaces $H^p(\bb D) \ (0 < p < 1) $ and the Bergamn spaces $A^p(\bb D) \ (0 < p < 1).$ In addition to these, we also cover the weighted Hardy spaces $H^2(\beta)$ with $\liminf(\beta_n)^{1/n}>0.$ The following are some more salient features of our Theorem B. 

\begin{enumerate}[(i)]

\item  For $r=1$ in Theorem B, the polynomials $(z-\alpha)^n$ for $n\ge 1$ and scalars $\alpha$ with $|\alpha|\ge 1$ are indeed outer functions in the Hardy spaces $H^p(\bb D)$. So, our Theorem B (with $\cl X=H^p(\bb D))$, just like Theorem \ref{MRH}, is also an analogue of a GKZ theorem for Hardy spaces.  Although our version seems more elegant as our hypotheses involve a smaller and special class of outer functions, it may be thought otherwise as we assume continuity of the linear functional, which follows automatically in Theorem \ref{MRH}. However, in Example \ref{cont}, we establish the necessity of continuity for our result.

%
%
\item Our proof of Theorem B is elementary and simple, and unlike the proofs of Theorem \ref{MRH} and \ref{MRG}, as given in \cite{MR1}, it does not depend on the classical GKZ theorem.

\item We do not require the space $\cl X$ to be a Banach space. We need it to be a vector space with a topology but do not require the topology to be compatible with the linear structure. As a result, as noted above, we cover more spaces as compared to Theorem \ref{MRG}.
\end{enumerate}

We also use Theorem A to prove a GKZ theorem for the weighted Hardy spaces on which the operator of multiplication with $z$ is strictly cyclic. For the reader's convenience, we recall the definition of a weighted Hardy space here; however, we defer the definition of a strictly cyclic operator until Section \ref{cyclic}. 

Let $\{\beta_n\}_{n=0}^\infty$ be a sequence of positive numbers with $\beta_0=1.$ Then the weighted Hardy space $H^2(\beta)$ 
consists of formal power series $f(z)=\sum_{n=0}^\infty a_nz^n$ such that 
$\sum_{n=0}^\infty |a_n|^2\beta_n^2$ is finite. The norm on $H^2(\beta)$ is given by $||f||^2_\beta=\sum_{n=0}^2|a_n|^2\beta_n^2.$ The equality 
$f(z)=\sum_{n=0}^\infty a_nz^n$ simply means that $f$ denotes this formal power series; it does not mean convergence of the series at any complex number 
$z.$ The sequence $\{z^n\}_{n=0}^\infty$ is an orthogonal basis for $H^2(\beta)$ with $||z^n||_\beta=\beta_n$ for each $n.$ 
If $sup_{n}{\beta_{n+1}/{\beta_n}}$ is finite, then $M_z(z^n)=z^{n+1}$ gives rise to a well-defined bounded operator on $H^2(\beta).$ Clearly, the converse is also true. In this article, we shall assume $M_z$ to be a bounded operator on $H^2(\beta).$

\vspace{.2 cm}  

\noindent {\bf Theorem C.} {\em Let $M_z,$ the operator of multiplication with $z,$ be strictly cyclic on the weighted Hardy space $H^2(\beta)$. Let $F:H^2(\beta)\to \bb C$ be a linear functional such that $F(1)=1$ and $F(f)\ne 0$ for every invertible $f\in H^2(\beta).$ Then there exists a scalar $w$ with $|w|\le r(M_z)$ such that 
$$
F(f)=f(w)
$$
for all $f\in H^2(\beta).$ }

\vspace{.2 cm} 
 
Of course, some functions in $H^2(\beta)$ are entire functions, but in general, the disc 
$\{w\in \bb C: |w|< \liminf\beta_n^{1/n}\}$ is the largest disc in which every formal power series in $H^2(\beta)$ converges. At this point, we want to remind that all weighted Hardy spaces with $\liminf\beta_n^{1/n}>0$ are already covered by Theorem B. However, there are weighted Hardy spaces  for which $\liminf\beta_n^{1/n}=0$ and on which the operator of multiplication with $z$ is strictly cyclic. We shall give one such example in Section \ref{cyclic}. All such spaces clearly lie outside the purview of Theorem B; so by means of Theorem C, we  establish a GKZ theorem for a whole new class of weighted Hardy spaces. 

 \section{A Brief Preview}
The organization of the paper is as follows. We prove Theorem A in Section \ref{poly} and use it to prove Theorem B in Section \ref{metric} and Theorem C in Section \ref{cyclic}. In Example \ref{cont}, 
we give an example that validates our assumption of continuity of the linear functional in Theorem B. Indeed, we construct an unbounded linear functional on the Hardy space $H^2(\bb D)$ such that it is 1 at the constant function 1 and it is non-zero on polynomials $(z-\alpha)^n$ for every $n\ge 1$ and each $\alpha$ with $|\alpha|\ge 1.$ This functional being unbounded can never be a point evaluation. In Corollary \ref{wcomp0}, we obtain a characterization of weighted composition operators for our setting.

\section{Proof of Theorem A: The GKZ theorem for polynomials}\label{poly}
In this Section, we shall prove Theorem A. Our proof is interesting in its own right and also because we 
extend–with some ingenuity–the idea used by Roitman and Sternfeld \cite[Lemma 2]{RS} to give an 
elementary proof of the classical GKZ theorem.  

\vspace{.2 cm}

\noindent {\bf Proof of Theorem A.}
Fix any $k\ge 0$, and take $n>k$. Set  
\begin{eqnarray}
p(t)&=& F((t-z)^n)\nonumber\\
&=& {n\choose0}t^n-{n\choose1}F(z)t^{n-1}+\dots+(-1)^{n} F(z^n)\label{poly1}
\end{eqnarray}
Then $p(t)$ is a polynomial in $t$ of degree $n$. Let $\lambda_1, \lambda_2,\dots, \lambda_n$ denote the roots (counting the multiplicity) of $p.$ Then the hypothesis $p(t)=F((t-z)^n)\neq 0$ whenever 
$|t|\ge r$ implies that $|\lambda_i|<r$ for $1\le i\le n.$ Further, we can write  
\begin{equation}\label{poly2}
p(t)=\prod_{i=1}^n (t-\lambda_i).
\end{equation} 
Then by comparing the coefficients of $t^{n-1}$ from Equations (\ref{poly1}) and (\ref{poly2}), we obtain  
\begin{equation}\label{poly3}
nF(z)= \sum_{i=1}^{n}\lambda_{i},
\end{equation} 
and comparing the coefficients of $t^{n-k}$ from Equations (\ref{poly1}) and (\ref{poly2}), we obtain  
\begin{equation}\label{poly4}
\sum_{1\le i_{1}<i_{2}<...<i_{k}\le n}\lambda_{i_{1}} \lambda_{i_{2}}\dots\lambda_{i_{k}}={n\choose k}F(z^{k}).
\end{equation}
 
Thus,  
\begin{eqnarray}
&&\left(nF(z)\right)^{k}-k!{n\choose k}F(z^{k})\nonumber\\ 
&=&\left(\sum_{i=1}^{n} \lambda_i\right)^{k} - k! \sum_{1\le i_{1}<i_{2}<...<i_{k}\le n}\lambda_{i_{1}} \lambda_{i_{2}}\dots\lambda_{i_{k}}\nonumber\\
&=&\sum_{\substack{j_{1}+j_{2}+\dots+j_{n}=k \\ j_i\ge0} }\frac{k! \, (\lambda_{1}^{j_{1}}\lambda_{2}^{j_2}\dots\lambda_{n}^{j_n})}{j_{1}!\, j_{2}!\,\dots\,j_{n}!}-k!\sum_{1\le i_{1}<i_{2}<\dots<i_{k}\le n}\lambda_{i_{1}} \lambda_{i_{2}}\dots\lambda_{i_{k}}\label{poly5}.
\end{eqnarray}
Clearly, each term in the later summation of Equation (\ref{poly5}) also appears in the first summation of Equation (\ref{poly5}). Also, $|\lambda_i|<r$ for each $i.$ Therefore, if we let  
$A_{\#}$ denote the number of terms in the expression $\displaystyle\sum_{\substack{j_{1}+j_{2}+\dots+j_{n}=k \\ j_i\ge0} }\frac{k!}{j_{1}!\, j_{2}!\,\dots\,j_{n}!} (\lambda_{1}^{j_{1}}\lambda_{2}^{j_2}\dots\lambda_{n}^{j_n})$
and $B_{\#}$ denote the number of terms in the expression $\displaystyle\sum_{1\le i_{1}<i_{2}<\dots<i_{k}\le n}\lambda_{i_{1}} \lambda_{i_{2}}\dots\lambda_{i_{k}}$, then 
\begin{equation*}
\left\vert(nF(z))^{k}-k!{n\choose k}F(z^{k})\right\vert\le k!(A_\#-B_\#)r^k.
\end{equation*}
Now using standard facts from combinatorics, 
\begin{eqnarray}
A_\#-B_\# &=&{n+k-1\choose n-1}- {n\choose k} \nonumber\\
&=&\frac{(n+k-1)!}{k!(n-1)!}-\frac{n!}{k!(n-k)!}
\nonumber \\
&=&\frac{(n+k-1)(n+k-2)\dots(n+k-(k+1))!}{k!(n-1)!}-\frac{n(n-1)\dots(n-(k-1))(n-k)!}{k!(n-k)!}
\nonumber\\
&=& \frac{n^k}{k!}\left[\Big (1+\frac{k-1}{n}\Big)\Big(1+\frac{k-2}{n}\Big)\dots1-\Big(1-\frac{1}{n}\Big)\dots\Big (1-\frac{k-1}{n}\Big)\right].
\nonumber
\end{eqnarray}
Therefore, 
\begin{eqnarray}
\left\vert(nF(z))^{k}-k!{n\choose k}F(z^{k})\right\vert &\le& k!(A_\#-B_\#)r^k
\nonumber\\
&=&n^k \left[\Big (1+\frac{k-1}{n}\Big)\Big(1+\frac{k-2}{n}\Big)\dots 1
-\Big(1-\frac{1}{n}\Big)\dots\Big (1-\frac{k-1}{n}\Big)\right]r^k.
\nonumber
\end{eqnarray}
Lastly, dividing both sides by $n^k$ and letting $n \to \infty$, we conclude 

\begin{equation*}
	F(z^k)=F(z)^k.
\end{equation*}
Hence, we have established that $F(z^k)=(F(z))^k$ for all $k\ge 0.$  

Let $w=F(z).$ Then $F(p)=p(w)$ for every polynomial $p\in \cl P.$ Finally, $F(z-w)=F(z)-wF(1)=0,$ which implies that $|w|<r.$ This completes the proof.  
\qed

\section{Proof of Theorem B: The GKZ Theorem for a class of topological spaces of complex-valued functions} \label{metric}

We are now ready to prove Theorem B, our GKZ theorem for topological spaces which also have a linear structure on them. But we do not assume our underlying space to be a topological vector space. 

\vspace{.2 cm}  
 
\noindent {\bf Proof of Theorem B.} The given hypotheses yields that $F:\cl P\to \bb C$ is linear, $F(1)=1,$ and 
$F((z-\alpha)^n)\ne 0$ for all $n\ge 1$ and scalars $\alpha$ with $|\alpha|\ge r.$ Therefore, using Theorem A, we get a scalar 
$w$ with $|w|<r$ such that $F(p)=p(w)$ for all polynomials $p\in \cl P.$ 
Now, let $f \in \cl X$. Then there exists a net of polynomials $\{p_{\lambda}\}_{\lambda}$ that converge to $f$. Since point evaluation at $w$ is continuous on $\cl X$, we have 
$p_{\lambda}(w) \to f(w)$. But, $p_{\lambda}(w)=F(p_\lambda)$ and $F$ is assumed to be continuous; so, $p_\lambda(w)=F(p_\lambda)$ converges to $F(f)$. Hence, 
$F(f)=f(w).$ This completes the proof.
\qed 

\vspace{.2 cm}


The following is our example to justify the necessity of the continuity of the linear functional in Theorem B.

\begin{ex}\label{cont} Let $\mathcal{B}$ be a Hamel basis for the Hardy space $H^2(\mathbb{D})$ obtained by extending the orthonormal basis $\{z^n:n \ge 0\}$. Since $\mathcal{B}$ is an uncountable set, we can choose a countable subset $\{f_n: n\in \mathbb{N} \}$ of $\mathcal{B}$ such that $f_n\ne z^m$ for any $n$ and $m$. For a fixed $w \in \mathbb{D}$, define $F: \mathcal{B} \to \mathbb{C}$ by
$$  
F(b)=
\begin{cases}
w^n &\quad \text{if}  \quad b =z^n,\\
m\norm{b} &\quad \text{if}  \quad b=f_m,\\
0 &\quad  \quad \text{otherwise},\\
\end{cases}
$$
and extend it linearly to $H^2(\bb D).$ Then $F(1)=1$ and $F(p)=p(w)$ for every polynomial $p.$ This implies    
$F((z-\alpha)^n)=(w-\alpha)^n\ne 0$ whenever $\alpha$ is a scalar with $|\alpha|\ge 1.$ Therefore, $F$ is a linear functional on $H^2(\bb D)$ which satisfies the hypotheses of Theorem B. But $F(f_n)=n||f_n||$ for each $n$ which implies that $F$ is unbounded.  

Suppose now that $F$ is a point evaluation on $H^2(\bb D),$ and the corresponding point is $\zeta.$ Then $\zeta=F(z)=w\in \bb D.$ This implies that 
$|F(f)|=|f(w)|\le ||f||||k_w||$ for all $f$ in $H^2(\bb D),$ where $k_w$ is the kernel function at $w.$  This means $F$ is bounded, which is a contradiction. Hence, $F$ cannot be a point evaluation on $H^2(\bb D)$.
\end{ex}

As in \cite{MR1}, we also obtain the following characterization of weighted composition operators for our setting. The proof is similar to the proof of Theorem 3.2 in \cite{MR1}, we include it here for completeness. Let $Hol(\bb D)$ denote the set of all holomorphic functions on the open unit disc $\bb D$. We assume that $Hol(\bb D)$ is given its usual $Fr\grave{e}chet$-space topology.

\begin{cor}\label{wcomp0} Let $\cl X$ be a vector subspace of $\cl F(r, \bb C)$ containing all polynomials, and let it has a topology such that point evaluations on it are continuous and polynomials are dense in it.  Let $T: \cl X \to Hol(\mathbb{D})$ be a continuous linear map such that $\left(T(z-\alpha)^n\right)(\zeta)\ne0$ for all $n\ge 0,$ for all scalars $\alpha$ with 
$|\alpha|\ge r,$ and for all $\zeta\in \bb D$. Then there exist holomorphic functions $\phi: \mathbb{D} \to \mathbb{D}$ and $ \psi: \mathbb{D} \to \mathbb{C}\setminus\{0\}$ such that 
\begin{equation}\label{wcomp}
Tf=\psi.(f\circ{\phi}),
\end{equation}
 for all $f\in \cl X.$
\end{cor}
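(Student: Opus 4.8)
The plan is to apply Theorem \ref{GKZ-m} pointwise over $\bb D$. Since $(z-\alpha)^0=1$, taking $n=0$ in the hypothesis gives $(T1)(\zeta)\ne 0$ for every $\zeta\in\bb D$, so $T1$ is a nowhere-vanishing element of $Hol(\bb D)$. Fix $\zeta\in\bb D$ and define $F_\zeta:\cl X\to\bb C$ by
\[
F_\zeta(f)=\frac{(Tf)(\zeta)}{(T1)(\zeta)}.
\]
First I would verify that $F_\zeta$ satisfies every hypothesis of Theorem \ref{GKZ-m}: it is linear because $T$ is; it is continuous because it is the composition of the continuous map $T:\cl X\to Hol(\bb D)$ with the evaluation functional at $\zeta$ (continuous on $Hol(\bb D)$ for its Fr\`echet-space topology), followed by division by the fixed nonzero scalar $(T1)(\zeta)$; we have $F_\zeta(1)=1$; and for $n\ge 1$ and $|\alpha|\ge r$ the hypothesis $(T(z-\alpha)^n)(\zeta)\ne 0$ yields $F_\zeta((z-\alpha)^n)\ne 0$. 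The ambient space $\cl X$ already satisfies conditions (i) and (ii) of Theorem \ref{GKZ-m}.

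Hence Theorem \ref{GKZ-m} supplies, for each $\zeta\in\bb D$, a scalar $w(\zeta)$ with $|w(\zeta)|<r$ such that $F_\zeta(f)=f(w(\zeta))$ for all $f\in\cl X$; unravelling the definition of $F_\zeta$,
\[
(Tf)(\zeta)=(T1)(\zeta)\,f(w(\zeta))\qquad(f\in\cl X,\ \zeta\in\bb D).
\]
Now set $\psi=T1$ and $\phi(\zeta)=w(\zeta)$. Then $\psi\in Hol(\bb D)$ and $\psi$ is nowhere zero, so $\psi:\bb D\to\bb C\setminus\{0\}$, and $|\phi(\zeta)|<r$ for every $\zeta$. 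It remains only to check that $\phi$ is holomorphic, and this is the sole point that needs an argument: applying the displayed identity to the polynomial $f(z)=z\in\cl X$ gives $w(\zeta)=(Tz)(\zeta)/(T1)(\zeta)$, that is,
\[
\phi=\frac{Tz}{T1},
\]
a quotient of two functions in $Hol(\bb D)$ with nowhere-vanishing denominator, hence $\phi\in Hol(\bb D)$. Substituting back into the displayed identity gives $Tf=\psi\cdot(f\circ\phi)$ for all $f\in\cl X$, which is exactly \eqref{wcomp}.

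There is no serious obstacle here: once Theorem \ref{GKZ-m} is in hand the argument is a direct point-by-point transcription, and the only places demanding a small amount of care are the continuity of each $F_\zeta$ (where one uses both the continuity of $T$ and the continuity of point evaluations on $Hol(\bb D)$) and the identification $\phi=Tz/T1$, which is what upgrades the pointwise scalars $w(\zeta)$ to a holomorphic symbol.
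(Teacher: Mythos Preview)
Your proof is correct and follows essentially the same approach as the paper: both define the normalized evaluation functional $F_\zeta(f)=(Tf)(\zeta)/(T1)(\zeta)$, apply Theorem~\ref{GKZ-m} to obtain a point $w(\zeta)$, and then recover the holomorphy of $\phi$ via the identification $\phi=Tz/T1$. The only cosmetic difference is the order of exposition---the paper defines $\phi=Tz/\psi$ at the outset and then checks $w_\zeta=\phi(\zeta)$, whereas you define $\phi(\zeta)=w(\zeta)$ first and then identify it as $Tz/T1$.
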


\begin{proof}
Set $\psi=T(1)$. Then, using the hypotheses, $\psi$ is holomorphic on $\bb D$ and does not assume zero at any point. Define $\phi:=(Tz)/\psi$. Then $\phi$ is clearly a holomorphic function on $\mathbb{D}$. Now we claim that $\phi$ and $\psi$ satisfy Equation (\ref{wcomp}).
	
Fix a $ \zeta \in \mathbb{D}$ and define $T_\zeta: \cl X \to \bb C$ by $T_\zeta(f) =(Tf)(\zeta)/\psi(\zeta)$. 
We can easily verify, using the assumptions on $T,$ that $T_\zeta$ is continuous, linear, and is non-zero on polynomials of the form $(z-\alpha)^n$ whenever $|\alpha|\ge r.$ Also, $T_\zeta(1)=1.$ Then, using Theorem B, we get a scalar $w_{\zeta}$ with $|w_{\zeta}|<r$ such that 
$T_\zeta(f)=f(w_{\zeta})$ for all $f\in \cl X$. But, $T_\zeta(f)=(Tf)(\zeta)/\psi(\zeta)$ for all $f\in \cl X$, therefore 
$Tf(\zeta)=\psi(\zeta)f(w_{\zeta}).$ In particular, $(Tz)(\zeta)= \psi(\zeta)w_{\zeta}$ which implies that 
$\phi(\zeta) = w_{\zeta}.$ Hence,  $Tf=\psi.(f\circ \phi)$ for all $f\in \cl X$.
\end{proof}
 
 \begin{remark} We note that Corollary \ref{wcomp0} will hold even when we replace $Hol(\mathbb{D})$with the space $C(\mathbb{D})$ of all complex-valued continuous functions on $\mathbb{D}$ considered with 
 the supremum norm. Of course, in such a case, we can guarantee only the continuity of the functions $\phi$ and $\psi $ in Equation (\ref{wcomp}).
 \end{remark}

\section {Proof of Theorem C: The GKZ theorem for strictly cyclic weighted hardy spaces} \label{cyclic}
In this section, we shall use Theorem A to prove Theorem C. Recall that Theorem C 
is our GKZ theorem for weighted Hardy spaces on 
which the operator of multiplication by the function $z$ is strictly cyclic.  

Let $\{\beta_n\}_{n=0}^\infty$ be a sequence of positive numbers with $\beta_0=1,$ and let $H^2(\beta)$ be the corresponding weighted Hardy space. We assume $M_z$ is a well-defined bounded operator on $H^2(\beta).$ Let $r(M_z)$ denotes its spectral radius and $r_0(M_z)$ equals $\liminf{\beta_n}^{1/n}.$

Furthermore, let $H^\infty(\beta)$ be the set of all formal power series that multiplies $H^2(\beta)$ into itself. Since the constant function $1$ is in $H^2(\beta), \ H^\infty(\beta)$ is a subset of $H^2(\beta).$ In fact, every $\phi\in H^\infty(\beta)$ defines a bounded operator on $H^2(\beta),$ which maps $f\in H^2(\beta)$ to $\phi f;$ we denote it by $M_\phi.$ In addition, if $\phi, \ \psi\in H^\infty(\beta),$ then $\phi \psi$ (the formal Cauchy 
product) is again in $H^\infty(\beta)$ and $M_{\phi \psi}=M_{\phi} M_\psi.$ This allows us to identify 
$H^\infty(\beta)$ as a commutative subalgebra of $\cl B(H^2(\beta)),$ the Banach algebra consisting of all bounded operators on $H^2(\beta).$ Therefore, for $\phi\in H^\infty(\beta),$ if we define $||\phi||_{\infty}:=||M_\phi||,$ then it becomes a commutative unital Banach algebra. Furthermore, the commutant of $M_z$ equals $H^\infty(\beta).$ 

A bounded operator $T$ on a Hilbert space $\cl H$ is said to be {\em strictly cyclic} if there exists $x_0\in \cl H$ such that $\{Rx_0: R\in \cl W[T]\}=\cl H,$ where $\cl W[T]$ is the closure of the algebra of polynomials in $T$ with respect to the weak operator topology. It is proved in \cite[Proposition 31]{Shie} that the operator $M_z$ on $H^2(\beta)$ is strictly cyclic if and only if $H^2(\beta)=H^\infty(\beta)$ (as sets), and in such a case the two norms, namely $||\cdot||_\beta$ and $||\cdot||_\infty,$ are equivalent. Actually, this proposition reveals more about these weighted Hardy spaces. In general, as noted above, we can guarantee the absolute convergence of power in $H^2(\beta)$ only on the disc $\{w: |w|< r_0(M_z)\}$ and $r_0(M_z)\le r(M_z).$ But, if $M_z$ is strictly cyclic, then Proposition 31 from \cite{Shie} establishes that $r(M_z)$ equals $r_0(M_z),$ and each formal power series in $H^2(\beta)$ converges absolutely at every $w$ with $|w|\le r(M_z).$ This in turn implies that the point evaluation at every $w$ with $|w|\le r(M_z)$ defines a bounded linear functional on $H^2(\beta).$  

Before we prove Theorem C, we want to note that one can derive this result using the classical GKZ theorem. Indeed, the assumption $M_z$ is strictly cyclic yields the equality of  
$H^2(\beta)$ and $H^\infty(\beta);$ and then the classical GKZ theorem implies that $F$ must be multiplicative. Further, as recorded in Corollary 1 on Page 94 in \cite{Shie}, when $M_z$ is strictly cyclic, then the multiplicative linear functionals on $H^\infty(\beta)$ are point evaluations with points coming from the closed disc 
$\{w: |w|\le r(M_z)\}$. This establishes the desired conclusion.  

On the contrary, our proof does not rely on the classical GKZ. Instead, it is an elegant application of Theorem A. 

\vspace{.2 cm}

\noindent {\bf Proof of Theorem C.}
 First of all, since $F$ is non-zero on invertible elements and $F(f-F(f))=0,$ therefore $f-F(f)$ cannot be invertible in $H^2(\beta) = H^\infty(\beta)$. This implies that 
$F(f)$ lies in the spectrum of $f$ in $H^\infty(\beta)$ which further implies that $|F(f)|\le ||f||_\infty.$ But $||\cdot||_\infty$ is equivalent to $||\cdot||_\beta,$ therefore $F$ is bounded.   

Now choose a real number $\eta$ such that $\eta>r(M_z)$ and take a scalar $\alpha$ such that 
$|\alpha|\ge \eta.$ This imples that $M_z-\alpha I$ is invertible in $\cl B(H^2(\beta)).$ But, the commutant of $M_z$ equals $H^\infty(\beta),$ therefore $z-\alpha$ is invertible in $H^\infty(\beta)$ which means it is invertible in $H^2(\beta).$ Therefore, $(z-\alpha)^n$ is invertible in $H^2(\beta)$ which implies $F(z-\alpha)^n\ne 0$ for every $n\ge 1.$ Then using Theorem A, there exists a scalar $w$ with $|w|\le \eta$ such that $F(p)=p(w)$ for every polynomial $p.$ In particular, $w=F(z).$ So the point $w$ is independent of the choice of 
$\eta$ and $|w|\le \eta$ for all $\eta>r(M_z).$ Thus, $|w|\le r(M_z).$ Now, $\{z^n/\beta_n\}_{n=0}^\infty$ is an 
orthonormal basis for $H^2(\beta)$ which yields polynomials as a dense set in $H^2(\beta).$ Moreover, point evaluation at $w$ is continuous which establishes that $F(f)=f(w).$    
\qed

\vspace{.2 cm}
 
As promised, we shall now give an example of a weighted Hardy space that is not covered by Theorem B but covered by Theorem C. This means we are looking for a weighted Hardy space $H^2(\beta)$ for which $r_0(M_z)=\liminf{\beta_n}^{1/n}=0$ and on which the operator $M_z$ is strictly cyclic.    

Let $\{w_n\}_{n=0}^\infty$ be a sequence of positive real numbers that is monotonically decreasing and square-summable. Set $\beta_0=1$ and $\beta_n=w_0\cdots w_{n-1}$ for all $n\ge 1.$ Then by \cite[Corollary 4, Page 98]{Shie}, the operator $M_z$ on $H^2(\beta)$ is strictly cyclic. Also, since $w_n$ converges to zero, $r_0(M_z)=r(M_z)=0.$ Actually, the operator $M_z$ is unitarily equivalent to the weighted shift operator $T$ on 
$H^2(\bb D)$ which is given by $T(z^n)=w_nz^{n+1}$ for all $n.$ The operator $T^*$ is famously known as a Donoghue operator. 

\subsection*{Acknowledgements} The first and third authors
thank the Mathematical Sciences Foundation,
Delhi, India for support and facilities needed to complete
the present work.

\end{document}